\theoremstyle{plain}
\newtheorem{theorem}{Theorem}
\newtheorem{proposition}{Proposition}
\newtheorem{corollary}{Corollary}
\theoremstyle{definition}
\newtheorem{definition}{Definition}
\theoremstyle{remark}
\begin{document} 
 
\author{Pavel S. Gevorgyan}
\address{Moscow Pedagogical State University}
\email{pgev@yandex.ru}

\title [Bi-Equivariant Fibrations]{Bi-Equivariant Fibrations}

\begin{abstract}
The lifting problem for continuous bi-equivariant maps and bi-equivariant covering homotopies is considered, 
which leads to the notion of a bi-equivariant fibration. An intrinsic characteristic of a 
bi-equivariant Hurewicz fibration is obtained. Theorems concerning a relationship between bi-equivariant fibrations 
and fibrations generated by them are proved.
\end{abstract}

\keywords{binary $G$-space, distributive action, bi-equivariant covering homotopy, bi-equivariant fibration, 
$H$-fixed point, orbit space}
\subjclass{54H15, 55R91}

\maketitle

\section{Introduction}
						    
The foundations of bi-equivariant topology were laid in \cite{Gev}, where the notions  
of a binary $G$-space and of a bi-equivariant map were introduced. 
The space $H_2(X)$ of all continuous invertible binary operations on a locally compact locally 
connected space $X$ with left multiplication $AB(x,x')=A(x,B(x,x'))$ for $A,B\in H_2(X)$ and $x,x'\in X$ is 
a topological group which acts binarily on the space $X$. If a topological group $G$ acts  binarily and 
effectively on $X$, then $G$ is a subgroup of $H_2(X)$ \cite{Gev2}. 
All binary $G$-spaces and bi-equivariant maps form a category, 
which is a natural extension of the category of $G$-spaces and equivariant maps. Set-theoretic questions 
related to bi-equivariant topology were also considered in~\cite{Gev2}--\cite{Gev-Naz}.

This paper is devoted to the important problems of lifting continuous bi-equivariant maps and bi-equivariant 
covering homotopies, which lead to bi-equivariant fibrations. 

To study bi-equivariant Hurewicz fibrations, we introduce the  notion of a bi-equivariant covering function 
and prove that a map $p\colon E\to B$ is a bi-equivariant covering homotopy with respect 
to any binary $G$-space $X$ if and only if $p$ has a bi-equivariant covering function. 
This is an intrinsic characteristic of a bi-equivariant Hurewicz fibration, because the definition 
of a bi-equivariant covering function for a map $p\colon E\to B$ is not related to a binary 
$G$-space~$X$. 

The property of being a bi-equivariant covering homotopy is preserved under the passage to 
closed subgroups of $G$; i.e., a bi-equivariant 
Hurewicz $G$-fibration $p\colon E\to B$ is also a bi-equivariant~$H$-fibration.

In the case where $E$ and $B$ are distributive binary $G$-spaces, a bi-equivariant surjective 
map $p\colon E\to B$ induces a bi-equivariant map $p^H\colon E^H\to B^H$ of the spaces of $H$-fixed points, 
which is a bi-equivariant Hurewicz $G$-fibration if so is $p\colon E\to B$. 

In the case where $G$ is a compact Abelian Lie group and $E$ and $B$ are distributive binary $G$-spaces, 
a bi-equivariant 
Hurewicz fibration $p\colon E\to B$ induces a Hurewicz fibration $p^* \colon E^* \to B^*$ of the orbit spaces.


\section{Auxiliary Definitions and Results}

\subsection{Binary $G$-Spaces and bi-equivariant Maps} 
Let $G$ be any topological group, and let $X$ be any topological space.

A continuous map $\alpha \colon G\times X^2\to X$ is called a \textit{binary action} 
of the topological group $G$ on the space $X$ if, for any $g,h\in G$ and $x_1,x_2\in X$, 
\begin{equation}\label{eq(1)}
gh(x,x')=g(x, h(x,x')),
\end{equation}
\begin{equation}\label{eq(2)}
e(x,x')=x',
\end{equation}
where $e$ is the identity element of $G$ and $g(x,x') = \alpha (g, x,x')$.

A triple $(G,X,\alpha)$ consisting of a space $X$,  a group $G$, and a fixed binary action $\alpha$ of $G$ on $X$ 
is called a \textit{binary $G$-space}.

Let $(G,X,\alpha)$ and $(G,Y,\beta)$ be binary $G$-spaces.
A continuous map $f\colon X\to Y$ is said to be \textit{bi-equivariant}, or \emph{$G$-bi-equivariant}, 
if 
$$f(\alpha(g,x,x'))=\beta(g,f(x),f(x')),$$
or
$$f(g(x,x'))=g(f(x),f(x')),$$
for all $g\in G$ and $x,x' \in X$.

All binary $G$-spaces and bi-equivariant maps form a category, which we denote by Bi-$G$-$Top$.

Note that, given any $G$-space $(X,G,\alpha)$, the unary action $\alpha$ on $X$ 
generates the binary action $\overline{\alpha}$ defined by
\begin{equation}\label{eq11}
\overline{\alpha}(g,x,x')=\alpha(g, x') \quad \text{or} \quad g(x,x')=gx'
\end{equation}
for all $g\in G$ and $x, x'\in X$, 
and any equivariant map $f:(G,X,\alpha)\to (G,Y,\beta)$ between $G$-spaces $X$ and $Y$
is bi-equivariant with respect to the action~\eqref{eq11}. Indeed,
$$f(\overline{\alpha}(g,x,x'))=f(\alpha(g, x') )=\beta(g,f(x'))=\overline{\beta} (g,f(x), f(x')).$$

Thus, the category Bi-$G$-$Top$ can be considered  as a natural extension of the category 
$G$-$Top$ of all $G$-spaces and equivariant maps.

Let $H$ be a subgroup of a group $G$. Then any binary $G$-space is also a binary $H$-space, 
and any $G$-bi-equivariant map is an $H$-bi-equivariant map. Thus, there exists a natural covariant 
functor from the category Bi-$G$-TOP to the category Bi-$H$-TOP.

Let $X$ be a binary $G$-space, and let $A$ be its subset. The set 
$G(A,A) = \{g(a,a'); g\in G, a,a'\in A\}$ is called the \emph{binary saturation} of $A$. 
A subset $A$ of a binary $G$-space $X$ is said to be \emph{bi-invariant}, or \emph{$G$-bi-invariant}, 
if $A$ coincides with its binary saturation: $G(A,A)=A$. A bi-invariant subset $A\subset X$ 
is itself a binary $G$-space;  it is called a \emph{binary $G$-subspace} of the space~$X$.

A binary $G$-space $X$ is said to be \textit{distributive} if, for any $x, x', x'' \in X$ and $g, h\in G$, 
we have 
\begin{equation}\label{eq1-1}
g(h(x,x'), h(x,x''))=h(x,g(x', x'')).
\end{equation}

The class of distributive binary $G$-spaces plays an important role in the theory of binary $G$-spaces. 
This is explained, in particular, by the special role of distributive subgroups of the group $H_2(X)$ 
of all continuous invertible binary operations on $X$. For example, any topological group 
is a distributive subgroup of the group of invertible binary operations on some space \cite{Gev1}. 
This statement is a binary topological analogue of Cayley's classical theorem on the representation 
of any finite group by unary operations (permutations).

\subsection{The space of $H$-fixed points of a binary $G$-space}

Let $X$ be a binary $G$-space, and let $H$ be  a subgroup of $G$. The set 
\[
X^H=\{x'\in X; \quad h(x,x')=x' \ \text{for all} \ h\in H,\ x\in X\}
\]
is called the \emph{space of $H$-fixed points} of the binary $G$-space $X$. This set 
is not generally $G$-bi-invariant. However, the  following assertion is valid.

\begin{proposition}\label{prop-1}
If $X$ is a distributive binary $G$-space, then $X^H$ is a $G$-bi-invariant subset and, therefore, 
a binary $G$-subspace.
\end{proposition}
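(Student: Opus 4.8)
The plan is to verify the defining set equality $G(X^H, X^H) = X^H$ directly, proving the two inclusions separately. The inclusion $X^H \subseteq G(X^H, X^H)$ is immediate from axiom~\eqref{eq(2)}: for any $a' \in X^H$ we have $a' = e(a', a')$, which exhibits $a'$ in the binary saturation of $X^H$ (taking $g = e$ and both arguments equal to $a'$). Hence the only substantive part is the reverse inclusion $G(X^H, X^H) \subseteq X^H$.

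First I would fix $g \in G$ and $a, a' \in X^H$ and set out to show $g(a,a') \in X^H$; by the definition of $X^H$ this amounts to checking that $h(x, g(a,a')) = g(a,a')$ for every $h \in H$ and every $x \in X$. The key step is to apply the distributivity identity~\eqref{eq1-1} read from right to left, with the substitution $x' = a$, $x'' = a'$, which gives
$$h(x, g(a,a')) = g(h(x,a), h(x,a')).$$
Now I would invoke the hypothesis $a, a' \in X^H$: since $h \in H$, we have $h(x,a) = a$ and $h(x,a') = a'$. Substituting these back yields $h(x, g(a,a')) = g(a,a')$, exactly the required fixed-point condition. As $h \in H$ and $x \in X$ were arbitrary, this proves $g(a,a') \in X^H$, hence $G(X^H, X^H) \subseteq X^H$. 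Combining the two inclusions gives $G(X^H, X^H) = X^H$, so $X^H$ is $G$-bi-invariant and therefore a binary $G$-subspace.

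The only place where real care is needed is in orienting the distributivity axiom correctly. Distributivity is precisely what allows the outer action of $h$ to be pushed through the binary operation $g(\cdot,\cdot)$, turning $h(x, g(a,a'))$ into $g(h(x,a), h(x,a'))$ where the $H$-fixedness of $a$ and $a'$ can be applied to each argument. Without~\eqref{eq1-1} there would be no reason for $H$-fixedness to transfer from $a, a'$ to $g(a,a')$, which explains why the $G$-bi-invariance asserted in Proposition~\ref{prop-1} is claimed only under the distributivity hypothesis and need not hold for a general binary $G$-space.
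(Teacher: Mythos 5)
Your proof is correct and follows essentially the same route as the paper: the core step in both is to read the distributivity identity~\eqref{eq1-1} from right to left so that $h(x,g(a,a')) = g(h(x,a),h(x,a')) = g(a,a')$, establishing $G(X^H,X^H)\subseteq X^H$. The only difference is that you also spell out the trivial reverse inclusion via $a' = e(a',a')$, which the paper leaves implicit.
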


\begin{proof}
Let $x',x''\in X^H$ be any $H$-fixed points. Then, since the binary action on $X$ is distributive, 
it follows that, for any $g\in G$, $h\in H$ and $x\in X$, we have 
\[
h(x, g(x',x'')) = g (h(x,x'), h(x,x''))= g(x',x''),
\]
i.e., $ g(x',x'') \in X^H$. Thus, $G(X^H, X^H)= X^H$.
\end{proof}

\begin{proposition}\label{prop-2}
If $X$ and $Y$ are binary $G$-spaces and $f\colon X\to Y$ is a surjective bi-equivariant map, 
then $f(X^H)\subset Y^H$. 
\end{proposition}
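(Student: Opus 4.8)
The plan is to take an arbitrary $H$-fixed point $x'\in X^H$ and verify directly that its image $f(x')$ satisfies the defining condition for membership in $Y^H$. Unwinding that definition, $f(x')\in Y^H$ means precisely that $h(y,f(x'))=f(x')$ for every $h\in H$ and every $y\in Y$, so the whole task reduces to checking this one identity for an arbitrary pair $(h,y)$.

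First I would fix $h\in H$ and $y\in Y$ and realize $y$ as a value of $f$. This is exactly where the surjectivity hypothesis enters: it produces some $x\in X$ with $f(x)=y$. Substituting and then applying bi-equivariance to the pair $(x,x')$ gives
\[
h(y,f(x'))=h(f(x),f(x'))=f(h(x,x')).
\]
Since $x'\in X^H$, the inner expression collapses via $h(x,x')=x'$, so that $h(y,f(x'))=f(x')$. As $h$ and $y$ were arbitrary, this establishes $f(x')\in Y^H$, and hence $f(X^H)\subset Y^H$.

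The only delicate point, and the reason surjectivity appears in the statement, is that the fixed-point condition quantifies over \emph{all} $y\in Y$, whereas bi-equivariance by itself only yields information about points of $Y$ that lie in the image $f(X)$. Without surjectivity one could conclude the invariance relation only for $y\in f(X)$, which is strictly weaker than membership in $Y^H$. Note that, in contrast with Proposition~\ref{prop-1}, no distributivity is required here; I therefore expect the argument to be a short direct computation rather than to present any genuine obstacle.
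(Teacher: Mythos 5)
Your proposal is correct and is essentially identical to the paper's own proof: both fix $x'\in X^H$, use surjectivity to write an arbitrary $y\in Y$ as $f(x)$, and compute $h(y,f(x'))=h(f(x),f(x'))=f(h(x,x'))=f(x')$. Your added remark on why surjectivity is indispensable (bi-equivariance alone only controls points of $f(X)$) is a fair observation but does not change the argument.
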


\begin{proof} 
Indeed, for any $x'\in X^H$, we have 
$$h(y, f(x')) = h(f(x), f(x')) = f(h(x, x')) = f(x'),$$
where $y\in Y$ is any element and $x\in X$ is a preimage of $y$. Therefore, $f(x')\in Y^H$. 
\end{proof}

We denote the restriction of a surjective bi-equivariant map $f\colon X\to Y$ to $X^H$ by $f^H\colon X^H\to Y^H$. 
Thus, if $X$ is a distributive binary $G$-space, then $f^H\colon X^H\to Y^H$ is a bi-equivariant map 
of binary $G$-spaces.

\subsection{The orbit space of a distributive binary $G$-space}

Let $X$ be a binary $G$-space. The orbit of a point $x\in X$ is the minimal bi-invariant subset $[x]$ of 
$X$ containing the point $x$. Obviously, $x\in G(x,x)\subset [x]$ for all $x\in X$. Therefore, 
if $G(x,x)$ is a bi-invariant set, then $G(x,x)= [x]$. 

As is known, the set $G(x,x)$ is not generally bi-invariant \cite[Example 2]{Gev-Naz}. 
However, in distributive binary $G$-spaces the sets $G(x,x)$ are bi-invariant, and hence 
$[x]=G(x,x)$ \cite[Theorem 9]{Gev2}. Moreover, the orbits of a distributive binary $G$-space 
either are disjoint or coincide \cite[Proposition 6]{Gev-Naz}; therefore, the space $X$ is 
partitioned into disjoint classes. 
We denote the corresponding quotient set by~$X|G$. 

Let $\pi=\pi_X:X\to X|G$ be the natural quotient map which sends each point $x\in X$ to its orbit $[x]$. 
The quotient topology on $X|G$ is defined in the standard way. The space thus obtained is called 
the \emph{orbit space} of the distributive binary $G$-space~$X$.

If $X$ and $Y$ are distributive binary $G$-spaces, then any bi-equivariant map   
$f:X\to Y$ generates a map $f^*:X|G \to Y|G$ of the orbit spaces, which is defined by 
$f^*([x])=[f(x)]$. This map is well defined, because it does not depend on the choice 
of representatives of orbits: $f^*([g(x,x)]) = [f(g(x,x))]=[g(f(x),f(x))]=[f(x)]=f^*([x])$ for any $g\in G$.

All notions, definitions, and results used in the paper without references, as well as all those mentioned above, 
can be found in \cite{Gev2}--\cite{Gev-Naz}. 

\subsection{The bi-equivariant covering homotopy property and bi-equivariant Hurewicz fibrations}

Important problems of algebraic topology are the problem of \emph{lifting} continuous maps and 
the \emph{covering homotopy problem}, which is dual to the problem of extending homotopies. 
The covering homotopy problem leads to the notion of a fibration. We study these problems 
in the category Bi-$G$-TOP of binary $G$-spaces and bi-equivariant maps.

Let $G$ be a topological group. Suppose given  binary $G$-spaces $E$, $B$ and $X$ and bi-equivariant maps 
$p\colon E\to B$ and $f\colon X\to B$. The \emph{problem of lifting} the bi-equivariant map $f$ to $E$ 
consists in determining whether there exists a continuous bi-equivariant map $\tilde{f}:X\to E$ such that 
$f=p\circ \tilde{f}$; this map is denoted by dashed arrow in the diagram
$$
\xymatrix
{
& & E \ar[d]^{p} \\
X \ar[rr]_{f} \ar@{-->}[rru]^{\tilde{f}} & & B .
}
$$

To turn this problem into a well-posed problem of binary $G$-homotopy category, we need the counterpart
of the \emph{homotopy extension property}, which is called the \emph{bi-equivariant 
covering homotopy property}. We say that a bi-equivariant map $p : E \to B$  has 
the \emph{bi-equivariant covering homotopy property} with respect to a binary $G$-space $X$ if, 
for any bi-equivariant maps $\tilde{f}:X \to E$ and $F: X\times I \to B$ such 
that $F\circ i_0=p\circ \tilde{f}$, where $i_0:X\to X\times I$ is the embedding defined by 
$i_0(x)=(x,0)$, $x\in X$, there exists a bi-equivariant map $\tilde{F} : X\times I \to  E$  for which 
the diagram
$$
\xymatrix
{
X \ar[rr]^{\tilde{f}} \ar[d]_{i_0} & & E \ar[d]^{p} \\
X\times I \ar[rr]_{F} \ar@{-->}[rru]^{\tilde{F}} & & B 
}
$$
is commutative, i.e.,  $p\circ \tilde{F} =F$ and $\tilde{F}\circ i_0=\tilde{f}$. 

Obviously, if $p : E \to B$ has the bi-equivariant covering homotopy property with respect to 
a binary $G$-space $X$ and $f,g:X\to B$ are $G$-homotopic bi-equivariant maps, then $f$ lifts to 
$E$ if and only if so does $g$. Thus, the existence of a lifting of a bi-equivariant map $f:X\to B$ is 
a property of a binary $G$-homotopy class of this map.

The bi-equivariant covering homotopy property  leads to the notion of a bi-equivariant fibration. 
A bi-equivariant map $p : E \to B$ is called a \emph{bi-equivariant Hurewicz fibration}, or 
a \emph{bi-equivariant Hurewicz $G$-fibration}, if $p$ has the bi-equivariant covering homotopy property 
with respect to any binary $G$-space $X$.  In this case, the binary $G$-space $E$ is 
called the \emph{space of a bi-equivariant fibration}, and $B$ is the \emph{base} of this fibration.


\section{Covering Binary $G$-Functions\\ and an Intrinsic Characteristic of bi-equivariant Fibrations}

Let $G$ be a compact topological group. Suppose given  binary $G$-spaces  $E$ and $B$ and 
a bi-equivariant map $p : E \to B$. 

For the path space $B^I$, we define a map $G\times B^I\times B^I \to B^I$ by 
\begin{equation}\label{eq-prputey}
g(\alpha, \alpha')(t)=g(\alpha(t), \alpha'(t)),
\end{equation}
where $g\in G$, $\alpha, \alpha' \in B^I$, and $t\in I$. 

\begin{proposition}
The map \eqref{eq-prputey} defines a continuous binary action of the group $G$ on the path space~$B^I$.
\end{proposition}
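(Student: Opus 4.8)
The plan is to verify the two defining axioms of a binary action, equations~\eqref{eq(1)} and~\eqref{eq(2)}, for the map~\eqref{eq-prputey}, and then to establish continuity. Since paths in $B^I$ are evaluated pointwise and the candidate action is defined pointwise in $t$, the algebraic axioms should reduce directly to the corresponding axioms already known to hold for the binary action of $G$ on $B$ itself. Specifically, for the identity axiom~\eqref{eq(2)}, I would fix $\alpha,\alpha'\in B^I$ and $t\in I$ and compute $e(\alpha,\alpha')(t)=e(\alpha(t),\alpha'(t))=\alpha'(t)$ using~\eqref{eq(2)} for the action on $B$; since this holds for every $t$, we get $e(\alpha,\alpha')=\alpha'$ as paths.

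For the multiplicativity axiom~\eqref{eq(1)}, I would similarly fix $g,h\in G$, paths $\alpha,\alpha'\in B^I$, and $t\in I$, and compute
\[
(gh)(\alpha,\alpha')(t)=(gh)(\alpha(t),\alpha'(t))=g\bigl(\alpha(t),h(\alpha(t),\alpha'(t))\bigr)=g\bigl(\alpha(t),h(\alpha,\alpha')(t)\bigr),
\]
where the middle equality is~\eqref{eq(1)} for the action on $B$. The right-hand side is by definition $g(\alpha,h(\alpha,\alpha'))(t)$, so again equality holds for all $t$ and hence as paths. This confirms that~\eqref{eq-prputey} satisfies both axioms formally.

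The main obstacle, and the only genuinely nontrivial point, is continuity of the map $G\times B^I\times B^I\to B^I$. Here I expect to use the compactness of $G$ together with the standard properties of the compact-open topology on function spaces. The natural strategy is to regard the candidate map as built from the original action $\alpha_B\colon G\times B^2\to B$ by postcomposition on the evaluation. Concretely, I would invoke the exponential law for the compact-open topology: a map into $B^I$ is continuous if and only if its adjoint into $B$ (obtained by pairing with the evaluation at $t\in I$) is continuous, provided $I$ is locally compact Hausdorff, which it is. The adjoint sends $(g,\alpha,\alpha',t)$ to $g(\alpha(t),\alpha'(t))$, and this factors as the composite of the evident evaluation maps $G\times B^I\times B^I\times I\to G\times B\times B$ with the continuous action $\alpha_B$; the evaluation maps are continuous because $I$ is locally compact. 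Compactness of $G$ is what guarantees that the adjunction and the evaluation behave well so that the resulting map into $B^I$ is continuous, allowing the exponential correspondence to be applied cleanly.

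Thus the proof decomposes into two easy pointwise verifications of the algebraic axioms and one continuity argument via the exponential law, with the latter being the step requiring care and the hypothesis that $G$ is compact.
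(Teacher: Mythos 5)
Your proposal is correct, and its algebraic half coincides with the paper's proof: both verify axioms \eqref{eq(1)} and \eqref{eq(2)} pointwise in $t$, reducing each to the corresponding axiom for the given binary action on $B$, and then observe the identities hold as equalities of paths. Where you differ is on continuity. The paper disposes of it in one sentence (``the continuity of the map follows from that of $\alpha$ and $\alpha'$ and of the binary action of $G$ on $B$''), whereas you give an actual argument via the exponential law: the adjoint $(g,\alpha,\alpha',t)\mapsto g(\alpha(t),\alpha'(t))$ factors through evaluation maps $B^I\times I\to B$, which are continuous because $I$ is locally compact Hausdorff, and continuity of the adjoint yields continuity of the induced map into $B^I$ with the compact-open topology. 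This fills a real gap in the paper's exposition. One correction: compactness of $G$ plays no role in this argument, contrary to your closing claims. The direction of the exponential correspondence you need (continuity of a map $X\times I\to B$ implies continuity of the associated map $X\to B^I$) holds with no hypotheses on $X$, and the evaluation step uses only local compactness of $I$; the compactness of $G$ is an ambient assumption of this section of the paper but is never invoked in this proposition. So your proof goes through, but the sentence attributing the ``clean application'' of the adjunction to compactness of $G$ should be deleted.
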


\begin{proof}
The continuity of the map \eqref{eq-prputey} follows from that of $\alpha$ and $\alpha'$ and of the 
binary action of $G$ on~$B$.

Let us check that the action \eqref{eq-prputey} is binary. For any $g,h\in G$, $\alpha,\alpha'\in B^I$, 
and $t,t'\in I$, we have

(1) $e(\alpha,\alpha')(t)=g(\alpha(t), \alpha'(t))=\alpha'(t)$, i.e., $e(\alpha,\alpha') = \alpha'$; 
\begin{multline*}
(2) \ gh(\alpha,\alpha')(t) = gh(\alpha(t),\alpha'(t))=g(\alpha(t), h(\alpha(t),\alpha'(t)))= \\
= g(\alpha(t), h(\alpha,\alpha')(t))= g(\alpha, h(\alpha,\alpha'))(t), 
\end{multline*}
i.e., $gh(\alpha,\alpha') = g(\alpha, h(\alpha,\alpha'))$.
\end{proof}

Since $E$ and $B^I$ are binary $G$-spaces, it follows that the product $E\times B^I$ 
is a binary $G$-space with coordinatewise binary action. Consider the subspace $\Delta\subset E\times B^I$ 
defined by 
\[
\Delta=\{(e,\alpha)\in E\times B^I; \ \alpha(0) = p(e)\}.
\]
\begin{proposition}
The set $\Delta$ is a bi-invariant subspace of the binary $G$-space $E\times B^I$. 
\end{proposition}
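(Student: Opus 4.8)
The plan is to verify the two inclusions $\Delta \subset G(\Delta,\Delta)$ and $G(\Delta,\Delta)\subset \Delta$ separately, the first being purely formal and the second carrying the actual content. For $\Delta \subset G(\Delta,\Delta)$ I would invoke the identity axiom \eqref{eq(2)}, which gives $e(z,z)=z$ for every point $z\in \Delta$; hence each point of $\Delta$ already lies in its binary saturation, and this inclusion holds for any subset whatsoever.

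The heart of the statement is the opposite inclusion. Here I would take an arbitrary $g\in G$ together with pairs $(e_1,\alpha_1),(e_2,\alpha_2)\in \Delta$ and show that their image under the coordinatewise action satisfies the defining condition of $\Delta$. Writing the action on the product explicitly, $g\bigl((e_1,\alpha_1),(e_2,\alpha_2)\bigr)=\bigl(g(e_1,e_2),\,g(\alpha_1,\alpha_2)\bigr)$, so the task reduces to checking the single equation $g(\alpha_1,\alpha_2)(0)=p\bigl(g(e_1,e_2)\bigr)$.

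This equation I would establish by chaining three facts in order. First, the definition \eqref{eq-prputey} of the action on $B^I$ gives $g(\alpha_1,\alpha_2)(0)=g\bigl(\alpha_1(0),\alpha_2(0)\bigr)$. Second, membership in $\Delta$ yields $\alpha_i(0)=p(e_i)$, turning the right-hand side into $g\bigl(p(e_1),p(e_2)\bigr)$. Third, the bi-equivariance of $p$ rewrites this as $p\bigl(g(e_1,e_2)\bigr)$, which is exactly the required identity. I do not expect a genuine obstacle: this is a routine compatibility check. The only point demanding care is keeping the two structures apart — the evaluation-at-$0$ map commutes with the binary action on $B^I$ by the very definition \eqref{eq-prputey}, and it is precisely this commutation, together with bi-equivariance of $p$, that makes the constraint $\alpha(0)=p(e)$ invariant under the group action.
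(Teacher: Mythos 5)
Your proof is correct and follows essentially the same route as the paper's: the substantive step --- computing $g(\alpha_1,\alpha_2)(0)=g(\alpha_1(0),\alpha_2(0))=g(p(e_1),p(e_2))=p(g(e_1,e_2))$ via the definition of the action on $B^I$ and the bi-equivariance of $p$ --- is exactly the paper's argument. Your explicit check of the trivial inclusion $\Delta\subset G(\Delta,\Delta)$ via the identity axiom is a small point of extra care that the paper leaves implicit, but it does not change the approach.
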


\begin{proof}
Let $(e,\alpha), (e',\alpha') \in \Delta$ be any elements, i.e., $\alpha(0) = p(e)$ 
and $\alpha'(0) = p(e')$. Then $g((e,\alpha), (e',\alpha'))= (g(e,e'), g(\alpha,\alpha'))$ and 
\[
g(\alpha,\alpha')(0)=g(\alpha(0), \alpha'(0))=g(p(e), p(e'))=p(g(e,e')).
\]
Therefore, $g((e,\alpha), (e',\alpha'))\in \Delta$ for any~$g\in G$.
\end{proof}

\begin{definition}\label{def-G-function}
A bi-equivariant map $\lambda : \Delta \to E^I$  satisfying the conditions 
\begin{equation}\label{eq-lambda}
\lambda (e,\alpha)(0) = e \quad  \text{and} \quad  [p\circ \lambda (e,\alpha)](t)=\alpha(t) 
\end{equation}
is called a \emph{bi-equivariant covering function}, or a \emph{covering bi-equivariant $G$-function}, for~$p$.
\end{definition} 

The following theorem describes a relationship between bi-equivariant Hurewicz fibrations and 
bi-equivariant covering functions.

\begin{theorem}\label{th-1}
A bi-equivariant map $p : E \to B$ is a bi-equivariant Hurewicz fibration if and only if $p$ 
has a bi-equivariant covering function.
\end{theorem}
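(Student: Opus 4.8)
The plan is to regard a covering function as the adjoint, under the exponential correspondence between homotopies $X\times I\to B$ and maps $X\to B^{I}$, of a solution to a covering-homotopy problem, and to prove the two implications separately. For the implication ``covering function $\Rightarrow$ fibration'' I would start from a bi-equivariant covering function $\lambda\colon\Delta\to E^{I}$ and an arbitrary covering-homotopy datum, i.e. bi-equivariant maps $\tilde f\colon X\to E$ and $F\colon X\times I\to B$ with $F\circ i_{0}=p\circ\tilde f$. For each $x\in X$ put $F_{x}\in B^{I}$, $F_{x}(t)=F(x,t)$; the compatibility condition gives $F_{x}(0)=F(x,0)=p(\tilde f(x))$, so $(\tilde f(x),F_{x})\in\Delta$, and I define $\tilde F(x,t)=\lambda(\tilde f(x),F_{x})(t)$. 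Conditions \eqref{eq-lambda} then yield at once $\tilde F(x,0)=\lambda(\tilde f(x),F_{x})(0)=\tilde f(x)$ and $p(\tilde F(x,t))=[p\circ\lambda(\tilde f(x),F_{x})](t)=F_{x}(t)=F(x,t)$, that is, $\tilde F\circ i_{0}=\tilde f$ and $p\circ\tilde F=F$; continuity of $\tilde F$ follows from that of $\lambda$, $\tilde f$, $F$ together with the exponential law.

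The heart of this implication is the bi-equivariance of $\tilde F$. Here I would first note that bi-equivariance of $F$ together with \eqref{eq-prputey} gives $F_{g(x,x')}(t)=F(g(x,x'),t)=g(F(x,t),F(x',t))=g(F_{x},F_{x'})(t)$, so $F_{g(x,x')}=g(F_{x},F_{x'})$ in $B^{I}$. Combining this with bi-equivariance of $\tilde f$ shows that $(\tilde f(g(x,x')),F_{g(x,x')})=g\bigl((\tilde f(x),F_{x}),(\tilde f(x'),F_{x'})\bigr)$ in $\Delta$; applying bi-equivariance of $\lambda$ and then \eqref{eq-prputey} for $E^{I}$ gives $\tilde F(g(x,x'),t)=g(\tilde F(x,t),\tilde F(x',t))$, which is the required bi-equivariance of $\tilde F$, the factor $I$ carrying the trivial binary action so that the parameter $t$ is transported unchanged.

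For the converse, ``fibration $\Rightarrow$ covering function'', I would feed the fibration a universal covering-homotopy datum on $X=\Delta$ itself: take $\tilde f\colon\Delta\to E$ to be the projection $\tilde f(e,\alpha)=e$ and $F\colon\Delta\times I\to B$ the evaluation $F((e,\alpha),t)=\alpha(t)$. Both are bi-equivariant (the first is a coordinate projection, the second by \eqref{eq-prputey}), and $F\circ i_{0}=p\circ\tilde f$ because $F((e,\alpha),0)=\alpha(0)=p(e)=p(\tilde f(e,\alpha))$. Since $p$ is a bi-equivariant Hurewicz fibration, its bi-equivariant covering homotopy property with respect to $\Delta$ produces a bi-equivariant $\tilde F\colon\Delta\times I\to E$ with $\tilde F\circ i_{0}=\tilde f$ and $p\circ\tilde F=F$. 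Setting $\lambda(e,\alpha)(t)=\tilde F((e,\alpha),t)$, these relations translate directly into the two conditions of Definition \ref{def-G-function}: $\lambda(e,\alpha)(0)=\tilde F((e,\alpha),0)=\tilde f(e,\alpha)=e$ and $[p\circ\lambda(e,\alpha)](t)=p(\tilde F((e,\alpha),t))=F((e,\alpha),t)=\alpha(t)$, while bi-equivariance of $\lambda$ is inherited from that of $\tilde F$ through the same adjunction. Thus $\lambda$ is a bi-equivariant covering function.

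I expect the only real obstacle to be the bi-equivariance bookkeeping: one must check that the coordinatewise binary $G$-action on $X\times I$ (with $I$ trivial) and the pointwise action \eqref{eq-prputey} on $B^{I}$ and $E^{I}$ correspond to one another under the exponential law, so that bi-equivariant homotopies match bi-equivariant maps into path spaces in both directions. Once this compatibility is recorded, the commutativity identities are immediate consequences of \eqref{eq-lambda}, and the passage from $\lambda$ to $\tilde F$ and from $\tilde F$ to $\lambda$ are mutually inverse passages to the adjoint.
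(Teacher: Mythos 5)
Your proposal is correct and follows essentially the same route as the paper's own proof: in the forward direction you build the covering homotopy by the same formula $\tilde F(x,t)=\lambda(\tilde f(x),F_x)(t)$ after verifying $F_{g(x,x')}=g(F_x,F_{x'})$, and in the converse direction you feed the fibration the same universal datum on $X=\Delta$ (coordinate projection and evaluation) and read off $\lambda$ from the resulting covering homotopy. Your framing via the exponential adjunction is a tidy way to organize the bookkeeping, but the constructions and verifications coincide with the paper's.
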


\begin{proof}
Let $\lambda : \Delta \to E^I$ be a bi-equivariant covering function for $p$. 
Consider any binary $G$-space $X$, bi-equivariant map $\tilde{f}:X\times 0 \to E$, and bi-equivariant 
homotopy $F: X\times I \to B$ for which $F(x,0)=p(\tilde{f}(x,0))$. 

Note that, for any $x\in X$, the formula $F_x(t)=F(x,t)$ defines a path $F_x\in B^I$. 
It follows from the bi-equivariance of $F$ that 
\begin{equation}\label{eq-F_g}
F_{g(x,x')} = g(F_x, F_{x'}).
\end{equation}
Indeed, 
\[
F_{g(x,x')} (t) = F(g(x,x'),t)= g(F(x,t), F(x',t)) = g(F_x(t), F_{x'}(t)) = g(F_x, F_{x'})(t).
\]

Now consider the homotopy $\tilde{F} : X\times I \to  E$ defined by 
\begin{equation}\label{eq-F(x,t)}
\tilde{F}(x,t) = \lambda(\tilde{f}(x,0), F_x)(t).
\end{equation}
Let us prove that $\tilde{F}$ is the required bi-equivariant covering homotopy.

The bi-equivariance of the homotopy $\tilde{F}$ follows from that of the maps $\lambda$, $\tilde{f}$, and $F$ 
and relations \eqref{eq-prputey}, \eqref{eq-F_g}, and \eqref{eq-F(x,t)}:
\begin{multline*}
\tilde{F}(g(x,x'),t) = \lambda(\tilde{f}(g(x,x'),0), F_{g(x,x')})(t) =  \lambda(g(\tilde{f}(x,0),\tilde{f}(x',0)), g(F_x,F_x'))(t) = \\
= \lambda (g( (\tilde{f}(x,0),F_x), (\tilde{f}(x',0), F_x') )) (t) = g( \lambda (\tilde{f}(x,0),F_x) , \lambda ( \tilde{f}(x',0), F_x' ) )(t) = \\
= g( \lambda (\tilde{f}(x,0),F_x)(t) , \lambda ( \tilde{f}(x',0), F_x' )(t) ) = g(\tilde{F}(x,t), \tilde{F}(x',t)).
\end{multline*}

According to conditions \eqref{eq-lambda}, we have
\[
\tilde{F}(x,0) = \lambda(\tilde{f}(x,0), F_x)(0) = \tilde{f}(x,0),
\]
\[
(p\circ \tilde{F})(x,t)= p(\tilde{F}(x,t)) = p(\lambda(\tilde{f}(x,0), F_x)(t)) = [p\circ \lambda(\tilde{f}(x,0), F_x)](t) = F_x(t) = F(x,t).
\]
Therefore, $\tilde{F}$ is a covering homotopy.

Now suppose that $p : E \to B$ is a bi-equivariant Hurewicz fibration. Consider the binary $G$-space $X=\Delta$ 
and the maps
\[
\tilde{f} : \Delta \times 0 \to E  \quad \text{and}  \quad F:\Delta \times I \to B 
\]
defined by 
\begin{equation}\label{eq-fF}
\tilde{f} [(e,\alpha),0] = e \quad \text{and}  \quad  F[(e,\alpha),t]=\alpha(t).
\end{equation}
The maps $\tilde{f}$ and $F$ are bi-equivariant. Indeed, for any $g\in G$, 
$(e,\alpha), (e',\alpha') \in \Delta \subset E\times B^I$, and $t\in I$, we have
\[
\tilde{f} [g((e,\alpha), (e',\alpha')), 0] = \tilde{f} [(g(e,e'), g(\alpha,\alpha')), 0] = g(e,e') = g(\tilde{f} [(e,\alpha),0], \tilde{f} [(e',\alpha'),0]),
\]
\begin{multline*}
F [g((e,\alpha), (e',\alpha')), t] = F [(g(e,e'), g(\alpha,\alpha')), t] = g(\alpha,\alpha')(t) = \\
= g(\alpha(t),\alpha'(t))  = g(F[(e,\alpha),t], F[(e',\alpha'),t]).
\end{multline*}

Note that $F:\Delta \times I \to B$ is a bi-equivariant homotopy of the map 
$p\circ \tilde{f} : \Delta \times 0 \to B$. Indeed,
\[
F[(e,\alpha),0]=\alpha(0)=p(e)= p(\tilde{f} [(e,\alpha),0]) = (p\circ \tilde{f}) [(e,\alpha),0].
\]
Hence there exists a bi-equivariant covering homotopy  \mbox{$\tilde{F} : \Delta \times I \to  E$} 
for $F$, i.e.,  
\begin{equation}\label{eq-1}
\tilde{F}[(e,\alpha),0]=\tilde{f}[(e,\alpha),0] \quad \text{and} \quad p \circ \tilde{F} = F.
\end{equation}
Now consider the map  $\lambda : \Delta \to E^I$ defined by 
\begin{equation}\label{eq-2}
\lambda (e,\alpha)(t)=\tilde{F}[(e,\alpha),t].
\end{equation}
Let us prove that $\lambda$ is a covering bi-equivariant $G$-function for $p$. The bi-equivariance of $\lambda$ 
follows from that of the covering homotopy $\tilde{F}$:
\begin{multline*}
\lambda (g((e,\alpha), (e',\alpha')))(t)=\tilde{F}[g((e,\alpha), (e',\alpha')),t] = \\
= g(\tilde{F}[(e,\alpha),t], \tilde{F}[(e',\alpha'),t]) = g(\lambda (e,\alpha)(t), \lambda (e',\alpha')(t)).
\end{multline*}
Conditions \eqref{eq-lambda} in Definition~\ref{def-G-function} also hold. By virtue of 
\eqref{eq-fF}, \eqref{eq-1}, and \eqref{eq-2}, we have
\[
\lambda (e,\alpha)(0) = \tilde{F}[(e,\alpha),0] = \tilde{f}[(e,\alpha),0] = e,
\]
\[
[p\circ \lambda (e,\alpha)](t)= p( \lambda (e,\alpha)(t)) = p(\tilde{F}[(e,\alpha),t]) = F[(e,\alpha),t] =  \alpha(t).
\]

This completes the proof of the theorem.
\end{proof}

Note that the last theorem gives an intrinsic characteristic of a bi-equivariant Hurewicz fibration, 
because Definition~\ref{def-G-function} of a bi-equivariant covering function does not involve any outer 
space~$X$.  

Applying Theorem~\ref{th-1} to the trivial binary action, we  obtain the following result.

\begin{corollary}\label{cor-1}
A continuous map $p : E \to B$ is a Hurewicz fibration if and only if $p$ has a covering function.
\end{corollary}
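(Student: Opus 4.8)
The plan is to deduce the corollary from Theorem~\ref{th-1} by specializing everything to the \emph{trivial binary action}, namely the action $g(x,x')=x'$ obtained from the trivial unary action via \eqref{eq11}. This action is defined on every topological space and, since it discards the group element $g$ entirely, it collapses the category Bi-$G$-$Top$ onto the ordinary category of topological spaces and continuous maps. My first step is to record this reduction precisely: under $g(x,x')=x'$ the bi-equivariance condition $f(g(x,x'))=g(f(x),f(x'))$ reads $f(x')=f(x')$, which holds for every continuous map $f$. Hence bi-equivariant maps are exactly continuous maps, and every space is canonically a binary $G$-space under this action.

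Next I would trace the specialization through the constructions preceding Theorem~\ref{th-1}. By \eqref{eq-prputey} the induced action on the path space $B^I$ is $g(\alpha,\alpha')(t)=g(\alpha(t),\alpha'(t))=\alpha'(t)$, again trivial, and the coordinatewise action on $E\times B^I$ is likewise trivial; consequently the subspace $\Delta=\{(e,\alpha);\ \alpha(0)=p(e)\}$ is automatically bi-invariant. Under this action the conditions \eqref{eq-lambda} in Definition~\ref{def-G-function} reduce to $\lambda(e,\alpha)(0)=e$ and $p\circ\lambda(e,\alpha)=\alpha$, with the bi-equivariance requirement on $\lambda$ now vacuous. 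These are precisely the defining conditions of a classical covering function for $p$. In the same way, the bi-equivariant covering homotopy property with respect to every binary $G$-space degenerates into the ordinary covering homotopy property with respect to every space, so that a bi-equivariant Hurewicz fibration becomes exactly a Hurewicz fibration in the usual sense.

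Having matched each notion in Theorem~\ref{th-1} with its classical counterpart, the corollary follows at once: $p\colon E\to B$ is a Hurewicz fibration if and only if it admits a covering function. I do not expect a genuine obstacle here; the only point deserving care is to confirm that dropping the (now empty) bi-equivariance clauses recovers the standard formulations on the nose, so that the specialized statement of Theorem~\ref{th-1} literally reads as the classical characterization of Hurewicz fibrations by covering functions.
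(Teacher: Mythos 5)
Your proposal is correct and is essentially the paper's own argument: the paper's entire proof of Corollary~\ref{cor-1} is the remark ``Applying Theorem~\ref{th-1} to the trivial binary action, we obtain the following result,'' and your specialization (trivial action $g(x,x')=x'$, vacuous bi-equivariance, $\Delta$ and the covering function reducing to their classical counterparts) is exactly what that remark leaves implicit. The only point worth tightening is your claim that this ``collapses the category Bi-$G$-$Top$'' --- for nontrivial $G$ only the trivially-acted subcategory collapses, so the cleanest reading is to take $G=\{e\}$, under which Theorem~\ref{th-1} literally becomes the classical characterization.
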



\section{Fibrations Generated by bi-equivariant $G$-Fibrations}

Let $H$ be a closed subgroup of a compact group $G$. Since all binary $G$-spaces are also binary $H$-spaces, \
and all $G$-bi-equivariant maps are $H$-bi-equivariant maps, there arises the natural question of whether 
the property of being a bi-equivariant Hurewicz foliation is preserved under the passage to a closed subgroup~$H$. 

\begin{theorem}\label{th-5-0}
Let $p : E \to B$ be a bi-equivariant Hurewicz $G$-fibration. Then, for any closed subgroup $H$ 
of the compact group $G$, the map $p : E \to B$ is also a bi-equivariant Hurewicz $H$-fibration.
\end{theorem}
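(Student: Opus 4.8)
The plan is to exploit the intrinsic characterization of bi-equivariant Hurewicz fibrations provided by Theorem~\ref{th-1}, which reduces the fibration property to the existence of a bi-equivariant covering function on the subspace $\Delta\subset E\times B^I$. The key observation is that this characterization is ``internal'': it involves only the data $p\colon E\to B$, the path space $B^I$, and the group acting, with no reference to an external testing space $X$. Since passage to the closed subgroup $H$ does not change the spaces $E$, $B$, or the map $p$, and does not change the binary action (it merely restricts the acting group), the same covering function should continue to work. So rather than verify the covering homotopy property directly for every binary $H$-space (which would require comparing the class of binary $H$-spaces against binary $G$-spaces), I would argue at the level of covering functions.

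Concretely, first I would invoke Theorem~\ref{th-1} in the $G$-equivariant direction: since $p\colon E\to B$ is a bi-equivariant Hurewicz $G$-fibration, there exists a $G$-bi-equivariant covering function $\lambda\colon\Delta\to E^I$ satisfying the two conditions in \eqref{eq-lambda}. Next I would check that all the structures entering Definition~\ref{def-G-function} restrict correctly to $H$. The binary $H$-action on $B^I$ is given by the very same formula \eqref{eq-prputey}, now with $g$ ranging over $H\subset G$; the subspace $\Delta$ is defined by a condition on $p$ that is independent of the group, and the Proposition showing $\Delta$ is bi-invariant under $G$ a fortiori gives bi-invariance under $H$. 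Thus $\Delta$ is a genuine binary $H$-subspace of $E\times B^I$, and the conditions \eqref{eq-lambda} are unchanged since they involve only $p$ and evaluation of paths, not the group.

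The heart of the argument is then the remark that a $G$-bi-equivariant map is automatically $H$-bi-equivariant: this is precisely the functoriality noted earlier in the excerpt, namely that any $G$-bi-equivariant map is an $H$-bi-equivariant map for every subgroup $H\subset G$. Applying this to $\lambda$, the intertwining identity $\lambda(g((e,\alpha),(e',\alpha')))=g(\lambda(e,\alpha),\lambda(e',\alpha'))$ holds for all $g\in G$, hence in particular for all $g\in H$. Therefore $\lambda$ is an $H$-bi-equivariant covering function for $p$. Invoking Theorem~\ref{th-1} in the reverse direction, now with the group $H$ in place of $G$, yields that $p\colon E\to B$ is a bi-equivariant Hurewicz $H$-fibration, as required.

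I do not expect a serious obstacle in this argument; the proof is essentially a transport of structure along the restriction functor from Bi-$G$-TOP to Bi-$H$-TOP. The only point demanding mild care is the hypothesis that $G$ be compact (and $H$ closed), which is imposed in the statement of Theorem~\ref{th-1} and in Section~3; I would make sure the covering function $\lambda$ produced for $G$ is genuinely available under that compactness hypothesis, and then observe that the closed subgroup $H$ of a compact group is itself compact, so that the hypotheses of Theorem~\ref{th-1} are satisfied when it is reapplied with the group $H$. No new estimate or construction is needed, and in particular the same $\lambda$ serves both fibrations, so the diagram-chasing reduces to reading off the already-established bi-equivariance of $\lambda$ over the smaller group.
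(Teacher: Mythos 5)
Your proposal is correct and follows essentially the same route as the paper: obtain a $G$-bi-equivariant covering function $\lambda\colon\Delta\to E^I$ from Theorem~\ref{th-1}, observe that it is automatically $H$-bi-equivariant (with $\Delta$ and conditions~\eqref{eq-lambda} unchanged under restriction of the group), and reapply Theorem~\ref{th-1} for $H$. Your version merely spells out details the paper leaves implicit, such as the compactness of the closed subgroup $H$.
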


\begin{proof}
By virtue of Theorem~\ref{th-1}, there exists a covering bi-equivariant $G$-function 
$\lambda : \Delta \to E^I$ for $p$. Since $\lambda : \Delta \to E^I$ is also a bi-equivariant $H$-map, 
it follows that $\lambda$ is a covering bi-equivariant $H$-function for the bi-equivariant $H$-map $p$. 
Therefore, $p : E \to B$  is a bi-equivariant Hurewicz $H$-fibration by virtue of the same Theorem~\ref{th-1}.
\end{proof}

Under certain constraints, the property of being a bi-equivariant Hurewicz $G$-fibration  
is preserved under the passage to binary $G$-subspaces of $H$-fixed points.

\begin{theorem}\label{th-5}
Let $E$ and $B$ be a distributive binary $G$-space, and let $p : E \to B$ be a surjective bi-equivariant 
Hurewicz $G$-fibration. Then, for any closed subgroup $H$ of the compact group $G$, the induced 
bi-equivariant $G$-map $p^H : E^H \to B^H$ between the spaces of $H$-fixed points is a bi-equivariant 
 Hurewicz $G$-fibration.
\end{theorem}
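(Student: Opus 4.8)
The plan is to produce, via the intrinsic characterization of Theorem~\ref{th-1}, a bi-equivariant covering function for $p^H$ by restricting a covering function of $p$. First I record the setup: since $E$ and $B$ are distributive, so are the path space $B^I$ (the pointwise action inherits \eqref{eq1-1}) and the product $E\times B^I$; hence by Proposition~\ref{prop-1} the fixed-point sets $E^H$, $B^H$ and the subspace $\Delta^H$ of $\Delta$ are binary $G$-subspaces, and by Proposition~\ref{prop-2} the restriction $p^H\colon E^H\to B^H$ is a well-defined $G$-bi-equivariant map. By Theorem~\ref{th-1} there is a covering bi-equivariant $G$-function $\lambda\colon\Delta\to E^I$ for $p$.

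The key bookkeeping step is to identify the domain of the sought covering function for $p^H$. I claim that $(B^I)^H=(B^H)^I$ as binary $G$-spaces: a path $\alpha$ lies in $(B^I)^H$ iff $h(\beta,\alpha)(t)=h(\beta(t),\alpha(t))=\alpha(t)$ for every $h\in H$, every $\beta\in B^I$ and every $t\in I$, and evaluating on constant paths $\beta\equiv x$ shows this is equivalent to $\alpha(t)\in B^H$ for all $t$. Consequently, because the action on $E\times B^I$ is coordinatewise, $\Delta^H=\{(e,\alpha)\in E^H\times(B^H)^I;\ \alpha(0)=p(e)\}$, which is exactly the space attached to the map $p^H$ in Definition~\ref{def-G-function}. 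I then set $\lambda^H:=\lambda|_{\Delta^H}$. The two covering-function identities \eqref{eq-lambda} for $\lambda^H$ are inherited verbatim from those of $\lambda$, and bi-equivariance of $\lambda^H$ is just the restriction of the bi-equivariance of $\lambda$ to the $G$-bi-invariant subspace $\Delta^H$.

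Everything then reduces to the single nontrivial point: that $\lambda^H$ actually takes values in $(E^H)^I$, i.e. that the chosen lift $\nu:=\lambda(e',\alpha')$ of a path $\alpha'\in(B^H)^I$ starting at a fixed point $e'\in E^H$ stays inside $E^H$ for all $t$. Bi-equivariance already delivers a large part of this: for every $\xi\in\Delta$ and $h\in H$, since $e'\in E^H$ and $\alpha'\in(B^I)^H$ force $h(\xi,(e',\alpha'))=(e',\alpha')$, we obtain $h(\lambda\xi,\nu)=\lambda(h(\xi,(e',\alpha')))=\nu$; that is, each $\nu(t)$ is fixed by every $h\in H$ against every point of the form $\lambda\xi(t)$. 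The main obstacle is to upgrade this to fixedness of $\nu(t)$ against \emph{all} of $E$, as the definition of $E^H$ demands. This is precisely the place where distributivity \eqref{eq1-1} and the compactness of $G$ must enter essentially, since a direct appeal to Proposition~\ref{prop-2} is unavailable: $\lambda$ is a lift-choosing map and is not surjective onto $E^I$, so Proposition~\ref{prop-2} would only recover fixedness against the image of $\lambda$. Concretely, I would exploit the conjugation relation that \eqref{eq1-1} imposes on the unary operations $g(x,-)$, together with the surjectivity of $p$ (to reach every fiber), in order to transport the fixedness from the image of $\lambda$ to arbitrary points of $E$. Once $\nu(t)\in E^H$ is established, $\lambda^H\colon\Delta^H\to(E^H)^I$ is a bi-equivariant covering function for $p^H$, and Theorem~\ref{th-1} applied to $p^H$ completes the proof.
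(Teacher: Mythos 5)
Your proposal follows exactly the same route as the paper's proof: take a covering bi-equivariant $G$-function $\lambda\colon\Delta\to E^I$ for $p$ (Theorem~\ref{th-1}), restrict it to $\Delta^H$, and apply Theorem~\ref{th-1} again to conclude that $p^H$ is a bi-equivariant Hurewicz $G$-fibration. Your preliminary bookkeeping is correct and matches the paper's setup: $(B^I)^H=(B^H)^I$, hence $\Delta^H=\{(e,\alpha)\in E^H\times(B^H)^I:\ \alpha(0)=p(e)\}$; $\Delta^H$ is $G$-bi-invariant by Proposition~\ref{prop-1} (applied to the distributive binary $G$-space $E\times B^I$); and $p^H$ is well defined by Proposition~\ref{prop-2}.

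However, your proof has a genuine gap, and you say so yourself: the claim that $\nu=\lambda(e',\alpha')$ takes values in $E^H$ is never established. Your diagnosis of why this is the crux is accurate — bi-equivariance of $\lambda$ yields $h(\lambda(e,\alpha)(t),\nu(t))=\nu(t)$ only when the first argument runs over the set $\{\lambda(e,\alpha)(t):(e,\alpha)\in\Delta\}$, which for $t>0$ need not be all of $E$ (a covering function may, for instance, compress each fiber as $t$ increases), whereas membership in $E^H$ demands $h(x,\nu(t))=\nu(t)$ for \emph{every} $x\in E$. But the closing sentence of your proposal (``I would exploit the conjugation relation \dots together with the surjectivity of $p$ \dots'') is a plan, not an argument, and it is not clear it can be executed as stated. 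Writing $T_h^x$ for the homeomorphism $h(x,-)$, distributivity \eqref{eq1-1} gives $T_h^{k(x,z)}=T_k^x\circ T_h^{z}\circ (T_k^x)^{-1}$, which converts fixedness of $\nu(t)$ under $T_h^{z}$ into fixedness of the \emph{different} point $k(x,\nu(t))$ under $T_h^{k(x,z)}$; it does not make $\nu(t)$ fixed under new operations. Moreover, distributivity alone genuinely cannot upgrade ``fixed against some first arguments'' to ``fixed against all'': for $G=H=\mathbb{Z}/2$ acting on $X=\{1,2,3\}$ by letting $h(x,-)$ be the transposition of the two points distinct from $x$ (one checks this binary action is distributive), the point $1$ satisfies $h(1,1)=1$ yet $h(2,1)=3$, so it is $H$-fixed against $x=1$ but lies outside $X^H$. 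Any correct argument must therefore use stronger features of the set $\{\lambda(e,\alpha)(t)\}$ — for example, that it is closed under all binary operations and meets every fiber of $p$ — which neither your sketch nor anything you wrote down actually exploits.

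In fairness, the paper is no more forthcoming at this point: it disposes of exactly this step with ``it is easy to see that $\lambda^H$ takes the set $\Delta^H$ to $(E^H)^I$.'' So your writeup is structurally identical to the published proof, and to your credit it is the only one of the two that recognizes where the real mathematical content lies; but as a self-contained proof it is incomplete at precisely that step.
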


\begin{proof}
The bi-invariance of the set of $H$-fixed points of a distributive binary $G$-space and the preservation of 
$H$-fixed points under a surjective bi-equivariant map were proved in Propositions~\ref{prop-1} and~\ref{prop-2}.

Let $\lambda : \Delta \to E^I$ be a covering bi-equivariant $G$-function for $p$, whose existence follows 
from Theorem~\ref{th-1}.  

Consider the set 
$$\Delta^H=\{(\dot{e},\dot{\alpha})\in E^H\times (B^H)^I; \ \dot{\alpha}(0) = p^H(\dot{e})\}.$$

Let us prove the existence of a covering bi-equivariant $G$-function 
$\lambda^H : \Delta^H \to (E^H)^I$ for a bi-equivariant map $p^H : E^H \to B^H$.

Since $\Delta^H\subset \Delta$, we can set $\lambda^H = \lambda|\Delta^H$. It is easy to see 
that $\lambda^H$ takes the set  $\Delta^H$ to $(E^H)^I$ and that 
$\lambda^H : \Delta^H \to (E^H)^I$ is a covering bi-equivariant $G$-function for the map  
$p^H : E^H \to B^H$. Therefore, by virtue of Theorem \ref{th-1}, $p^H : E^H \to B^H$ is a bi-equivariant 
Hurewicz $G$-fibration.
\end{proof}

In the case of distributive binary $G$-spaces, the property of being a binary Hurewicz $G$-fibration 
is also preserved under the passage to orbit spaces.

\begin{theorem}\label{th-5-1}
Let $G$ be a compact Abelian Lie group, and let $E$ and $B$ be distributive binary $G$-spaces. 
If $p : E \to B$ is a bi-equivariant Hurewicz $G$-fibration, then the induced map  
$p^* : E|G \to B|G$ of orbit spaces is a Hurewicz fibration.
\end{theorem}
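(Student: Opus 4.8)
The plan is to reduce the statement to the intrinsic characterization of Corollary~\ref{cor-1}: the continuous map $p^* : E|G \to B|G$ is a Hurewicz fibration if and only if it admits a covering function $\lambda^* : \Delta^* \to (E|G)^I$, where
\[
\Delta^* = \{([e],\beta) \in (E|G)\times(B|G)^I;\ \beta(0) = p^*([e])\},
\]
satisfying $\lambda^*([e],\beta)(0) = [e]$ and $p^*\circ\lambda^*([e],\beta) = \beta$. Since $p$ is a bi-equivariant Hurewicz $G$-fibration, Theorem~\ref{th-1} supplies a covering bi-equivariant $G$-function $\lambda : \Delta \to E^I$, and the idea is to build $\lambda^*$ by lifting orbit-level data to the binary level, applying $\lambda$, and projecting back down along the orbit maps $\pi_E$ and $\pi_B$.

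Concretely, I would first seek a continuous lifting $\sigma : \Delta^* \to \Delta$, $([e],\beta)\mapsto (e',\alpha)$, with $\pi_E(e') = [e]$, $\pi_B\circ\alpha = \beta$, and $\alpha(0) = p(e')$ (so that $(e',\alpha)\in\Delta$). Granting such a $\sigma$, I would set $\lambda^*([e],\beta) = \pi_E\circ\lambda(\sigma([e],\beta))$. The covering-function identities are then immediate from those of $\lambda$ together with the relation $\pi_B\circ p = p^*\circ\pi_E$ (which is exactly the definition $p^*([x])=[p(x)]$): indeed $\lambda^*([e],\beta)(0) = [\lambda(e',\alpha)(0)] = [e'] = [e]$, and $p^*\circ\lambda^*([e],\beta) = \pi_B\circ p\circ\lambda(e',\alpha) = \pi_B\circ\alpha = \beta$. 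Continuity of $\lambda^*$ follows from continuity of $\sigma$, of $\lambda$, and of $\pi_E$ via the exponential law for path spaces. Note that no well-definedness question arises: $\lambda^*$ is built from a single fixed $\sigma$, so although distinct lifts of a path in $B|G$ generally project to distinct liftings in $E|G$, any one continuous choice suffices for a covering function.

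The whole difficulty is therefore concentrated in constructing the continuous lifting $\sigma$, i.e. in lifting a path $\beta$ in the orbit space $B|G$ to a path $\alpha$ in $B$, together with a compatible lift $e'\in[e]$ of the initial orbit satisfying $p(e')=\alpha(0)$, all depending continuously on $([e],\beta)$. This is the point at which the hypothesis that $G$ is a compact Abelian Lie group is indispensable. Each orbit $G(x,x)$ is the continuous image of the compact group $G$, and its isotropy set $G_x = \{g\in G;\ g(x,x)=x\}$ is a closed (hence, for a Lie group, nicely embedded) subgroup, so the orbit is a homogeneous space $G/G_x$. I would invoke the slice theorem for compact Lie group actions to obtain $G$-invariant tubular neighborhoods and local sections of $\pi_B$, then use compactness of $I$ to subdivide $\beta$ into finitely many subpaths each lying in one such neighborhood, and patch the resulting local lifts. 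The role of commutativity is to force every isotropy group to be normal, so that the orbits $G/G_x$ are themselves compact Abelian Lie groups and the transition data along the subdivision can be chosen coherently, producing a lift that is continuous not merely for a fixed $\beta$ but as a function of $([e],\beta)\in\Delta^*$.

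The step I expect to be the main obstacle is precisely this last one: upgrading the pointwise and local path-lifting afforded by the slice theorem to a single \emph{globally continuous} lifting function $\sigma$ on all of $\Delta^*$. Path lifting through an orbit map is never canonical, since lifts differ by the ambient binary $G$-action and generically do not form a single orbit in the path space; the argument must therefore make compatible choices across all of $\Delta^*$ simultaneously, and controlling these choices across changes of orbit type is where the Lie-theoretic input (slices) and the Abelian hypothesis (normality of isotropy, coherence of the patching) carry the real weight. Once $\sigma$ is available, the remainder is the routine verification above, and an appeal to Corollary~\ref{cor-1} completes the proof.
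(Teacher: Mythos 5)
Your overall scheme is the same as the paper's: obtain $\lambda$ from Theorem~\ref{th-1}, aim at Corollary~\ref{cor-1} by producing a covering function for $p^*$, and manufacture that function by lifting orbit-level data to $(E,B)$, applying $\lambda$, and projecting back along $\pi_E$. Where you differ is in how the lifting is handled. The paper never asks for your continuous global section $\sigma\colon\Delta^*\to\Delta$: it observes that, by distributivity and commutativity, $gx:=g(x,x)$ is a genuine unary action of $G$ on $B$ whose orbits are the binary orbits, invokes Bredon's path-lifting theorem \cite{Bredon} to lift each path in $B|G$ separately, and sets $\lambda^*(e^*,\alpha^*)=(\lambda(e,\alpha))^*$ for an arbitrary lift $(e,\alpha)$, asserting that this is well defined; continuity is not discussed there. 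Your proposal, to its credit, makes the hidden burden explicit: either one shows the projected path is independent of the chosen lift (the paper's implicit route), or one makes a single continuous choice of lifts (your $\sigma$).

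The gap is that $\sigma$ is never constructed, and in fact it cannot be: the required object does not exist in general, because the statement fails without further hypotheses, so neither your slice-theorem patching nor any other construction can close it. Take $G=\mathbb{Z}/2$; let $B=\mathbb{R}$ carry the binary action $g(x,x')=gx'$ induced by the sign involution, let $E=\mathbb{R}\times\mathbb{Z}/2$ carry the binary action induced by the free involution $(x,\epsilon)\mapsto(-x,1-\epsilon)$, and let $p$ be the first projection. Both spaces are distributive (induced binary actions of an abelian group always are), and $\lambda((x,\epsilon),\alpha)(t)=(\alpha(t),\epsilon)$ is a bi-equivariant covering function for $p$, so $p$ is a bi-equivariant Hurewicz $G$-fibration by Theorem~\ref{th-1}. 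But $E|G\cong\mathbb{R}$, $B|G\cong[0,\infty)$, and under these identifications $p^*$ becomes $y\mapsto|y|$, which is not a Hurewicz fibration: its fibers over $0$ and over $1$ are a point and a discrete two-point space, which are not homotopy equivalent even though $[0,\infty)$ is path connected. By Corollary~\ref{cor-1}, $p^*$ then admits no covering function at all, hence no continuous $\sigma$ exists; the obstruction sits exactly where you anticipated trouble, at paths meeting points with nontrivial isotropy, where lifts branch and no continuous choice is possible, and the Abelian hypothesis does nothing to prevent this. The same example shows that the paper's own well-definedness claim fails: with $e=(1,0)$ and $\alpha^*(t)=|1-2t|$, the two lifts $\alpha_1(t)=1-2t$ and $\alpha_2(t)=|1-2t|$ yield different projected paths $(\lambda(e,\alpha_i))^*$. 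So you located the genuine difficulty correctly, but it is not fillable as stated; a correct version of the theorem needs hypotheses forcing unique path lifting for the orbit maps (for instance, freeness of the associated unary actions), not merely compactness and commutativity of $G$.
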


\begin{proof}
Let  $p : E \to B$ be a bi-equivariant Hurewicz $G$-fibration. By Theorem~\ref{th-1} there exists a 
covering bi-equivariant $G$-function $\lambda : \Delta \to E^I$ for $p$. 

Consider the set 
$$\Delta_G=\{(e^*,\alpha^*)\in E|G\times (B|G)^I; \ \alpha^*(0) = p^*(e^*)\}.$$
Let us prove that there exists a covering function  $\lambda^*:\Delta_G \to (E|G)^I$ for the 
map $p^* : E|G \to B|G$.

Since $B$ is a distributive binary $G$-space, it follows that its orbits have the form $G(x,x)$ for any 
$x\in B$. Note that $B|G$ can also be treated as the orbit space of the $G$-space $B$ with the action 
$gx = g(x,x)$, $g\in G$, $x\in B$. This formula indeed defines an action of the group $G$ on $B$,  because 
the group $G$ is commutative and the  binary action of the group $G$ on $B$ is distributive. 

Note that any path $\alpha^*:I \to B|G$ can be lifted to $B$ (\cite[Theorem 6.2]{Bredon}), i.e., 
there exists a path $\alpha : I\to B$ such  that $\pi_B \circ \alpha = \alpha^*$, where 
$\pi_B: B\to B|G$ is the orbit projection. Now we define the required map $\lambda^*:\Delta_G \to (E|G)^I$ by
$$\lambda^*(e^*,\alpha^*)(t)=(\lambda(e,\alpha)(t))^*.$$
The map $\lambda^*$ is well defined, and it is a covering function for $p^* : E|G \to B|G$. 
Therefore, by Corollary~\ref{cor-1}, $p^* : E|G \to B|G$ is a Hurewicz fibration.
\end{proof}



\begin{thebibliography}{9}
 \bibitem{Bredon}
  \textit{G.\,E. Bredon}, Introduction to Compact Transformation Groups. Academic Press, New York, 1972.
  
 \bibitem{Gev2}
\textit{P.\,S. Gevorgyan}, 
``Groups of binary operations and binary G-spaces,'' \emph{Topology and its Applications, 201, 2016, 18--28.}

\bibitem{Gev1} 
\textit{P.\,S. Gevorgyan}, 
``Groups of Invertible Binary Operations of a Topological Space,''  \emph{Journal of Contemporary Mathematical Analysis (Armenian Academy of Sciences), 2018, 53, 16--20}.
 
 \bibitem{Gev}  
\textit{P.\,S. Gevorgyan}, 
``On Binary G-Spaces,''  \emph{Mathematical Notes, 96:4, 2014, 127--129.}

\bibitem{Gev-Iliadis} 
\textit{P.\,S. Gevorgyan, S.\,D. Iliadis}, ``Groups of generalized isotopies and generalized G-spaces,'' \emph{Matematicki Vesnik, 70:2 (2018), 110--119.}

 \bibitem{Gev-Jim} 
 \textit{P.\,S. Gevorgyan, R. Jimenez},
 ``On equivariant fibrations of G-CW-complexes,'' \emph{Sbornik: Mathematics, 2019, 210:10, 1428--1433}.
 
\bibitem{Gev-Naz}
\textit{P.\,S. Gevorgyan, A.\,A. Nazaryan},
 ``On Orbits and Bi-invariant Subsets of Binary G-Spaces,'' \emph{Mathematical Notes, 109:1 (2021), 38--45.}
 
 \bibitem{Spener} 
 \textit{E.\,H. Spanier}, Algebraic Topology. M.: Mir, 1971.
 \end{thebibliography}
\end{document}